\newcommand\tr{\operatorname{Tr_g}}
\newcommand\Div{\operatorname{div}}
\def\Ric{\operatorname{Ric_g}}
\newtheorem{theo}{\bf Theorem}[section]
\newtheorem{lem}{\bf Lemma}[section]
\theoremstyle{remark}
\numberwithin{equation}{section}
\begin{document}
	\newcommand*{\vertbar}{\rule[-1ex]{0.5pt}{2.5ex}}
	\newcommand*{\horzbar}{\rule[.5ex]{2.5ex}{0.5pt}}
	
	\title[Geometry of almost $*$-Ricci solitons]{Geometry of almost contact metrics as almost $*$-Ricci solitons}
     \author{Dhriti Sundar Patra}
     \address{Department of Mathematics,University of Haifa, Mount Carmel, 31905 Haifa,  Israel}
     \email{dhriti.ju@gmail.com}
\author{Akram Ali}
     \address{Department of Mathematics, College of Science,\\ King Khalid University, 9004 Abha, Saudi Arabia\\
     Departamento de Mathematica, Instituto de Cincias Exacts-ICE Universidade Federal do Amazonas, Av. General Rodrigo Octavio, 6200, 69080-900 Manaus, Amazonas, Brazil.}
     \email{akramali133@gmail.com; akramali@ufam.edu.br}

     \author{Fatemah Mofarreh}
     \address{Mathematical Science Department Faculty of Science Princess Nourah bint Abdulrahman University. Riyadh 11546 Saudi Arabia.}	
\email{fyalmofarrah@pnu.edu.sa}

	\keywords{Almost contact metric structure, Einstein manifold, $*$-Ricci tensor, almost $*$-Ricci soliton, infinitesimal contact transformation}
     \subjclass{Primary 53C15; Secondary 53C25; 53D15}
	\date{14/05/2017}
	
	\dedicatory{}
	
	\begin{abstract}
	In the present paper, we give some characterizations by considering $*$-Ricci soliton as a Kenmotsu metric. We prove that if a Kenmotsu manifold represents an almost $*$-Ricci soliton with the potential vector field $V$ is a Jacobi along the Reeb vector field, then it is a steady $*$-Ricci soliton. Next, we show that a Kenmotsu matric endowed an almost $*$-Ricci soliton is Einstein metric if it is $\eta$-Einstein or the potential vector field $V$ is collinear to the Reeb vector field or $V$ is an infinitesimal contact transformation.	
	\end{abstract}
	
	\maketitle
	
\section{Introduction}
    In \cite{pigola2011ricci}, Pigola et al. introduced the notion of almost Ricci soliton as a generalization of Ricci soliton by adding the condition on the parameter $\lambda$ to be a variable function. More precisely, a Riemannian manifold $(M,g)$ is said to be a Ricci soliton if there exist
    a vector field $V$ and a constant $\lambda$ satisfying
    \begin{align}\label{1.0}
    \frac{1}{2}\,\pounds_{V}g + \Ric = \lambda\,g,
    \end{align}
    where $\pounds_V$ denotes the Lie-derivative in the direction of $V$ and $\Ric$ is the Ricci tensor of $g$. The study on the Ricci solitons and almost Ricci solitons have a long history and a lot of achievements were acquired (see \cite{fernandez2008remark,munteanu2013gradient,petersen2009rigidity,pigola2011ricci}). On the other hand, the almost contact manifolds admitting Ricci solitons or almost Ricci solitons were also been studied by many researchers, see \cite{Chen1, Chen2,ghosh2013eta,ghosh2020ricci,ghosh2019ricci,P20,sharma2008certain,sharma2014almost,wang2017ricci}.\\
    
    The notion of $*$-Ricci tensor was
    introduced by Tachibana \cite{tachibana1959} on almost Hermitian manifolds and studied by Hamada and Inoguchi \cite{hamada2002real} on real hypersurfaces of non-flat complex space forms(for details see \cite{ivey2010ricci}) and defined by
    \begin{align*}
    	\Ric^{*}(X,Y) = \frac{1}{2}\tr\{\varphi o R(X,\varphi\,Y)\}
    \end{align*}
    for all vector fields $X$, $Y$ on $M$ and $\varphi$ is a $(1,1)$-tensor field. Furthermore, Hamada gave a complete classification of $*$-Einstein Hopf hypersurfaces in non-flat complex space form. If the $*$-Ricci tensor is a constant multiple of the Riemannian metric $g$, then $M$ is said to be a $*$-Einstein manifold. As a generalization of $*$-Einstein metric, Kaimakamis et al. \cite{kaimakamis2014ricci2} introduced a so-called $*$-Ricci soliton, where essentially they modified the definition of Ricci soliton by replacing the Ricci tensor $\Ric$ in soliton equation \eqref{1.0} with the $*$-Ricci tensor $\Ric^{*}$ and defined by
    \begin{align}\label{1.1}
    \frac{1}{2}\,\pounds_{V}g + \Ric^* = \lambda\,g.
    \end{align}
    Note that a $*$-Ricci soliton is trivial if the vector field $V$ is Killing, and in this case, the manifold becomes \textit{$*$-Einstein}.
    Thus, it is considered as a natural generalization of $*$-Einstein metric. A $*$-Ricci soliton is said to be almost $*$-Ricci soliton if $\lambda$ is a smooth function on $M$. Moreover, an almost $*$-Ricci soliton is called shrinking, steady and expanding according to as $\lambda$ is positive, zero and negative, respectively. In \cite{kaimakamis2014ricci2},  it was studied a real hypersurfaces of a non-flat complex space form admitting a $*$-Ricci soliton whose potential vector field is the structure vector field and proved that a real hypersurface in a complex projective space does not admit a $*$-Ricci soliton. They have also shown that a real hypersurface of complex hyperbolic space admitting a $*$-Ricci soltion is locally congruent to a geodesic hypersphere.\\

    %
    %
    %
    
    In \cite{kenmotsu1972class}, Kenmotsu first introduced and studied a particular class of almost contact metric manifolds, known as Kenmotsu manifolds, characterized by the terms of warped products.
    The warped product $\mathbb{R}\times_{f} N$ (of the real line $\mathbb{R}$ and a K\"{a}hler manifold $N$) with the warping function
    \begin{align}\label{E-f-warp}
    f(t)=ce^t,
    \end{align}
    given on an open interval $J=(-\varepsilon, \varepsilon)$,
    admits Kenmotsu structure. Conversely, every point of a Kenmotsu manifold has a neighbourhood, which is locally a warped product $J\times_{f} N$, where $f$ is given by \eqref{E-f-warp}.
    In recent years, several anthers studied Ricci soliton and almost Ricci soliton on contact metric manifolds \cite{sharma2008certain,sharma2014almost}, Kenmotsu manifolds \cite{ghosh2013eta,ghosh2019ricci}, cosymplectic manifold \cite{wang2017ricci}. First, Ghosh \cite{ghosh2019ricci} proved that a Kenmotsu metric as a Ricci soliton is Einstein if the metric is $\eta$-Einstein \cite{ghosh2013eta} or the potential vector field $V$ is contact. Recently, Patra-Rovenski \cite{patra2020almost} proved that a Kenmotsu metric as an $\eta$-Ricci soliton is Einstein if either it is $\eta$-Einstein or the potential vector field $V$ is an infinitesimal contact transformation or $V$ is collinear to the Reeb vector field.
    
     Next, Chen \cite{xiaomin2018real} considered a real hypersurface of a non-flat complex space form which admits a $*$-Ricci soliton whose potential vector field belongs to the principal curvature space and the holomorphic distribution.
    Recently, Ghosh-Patra \cite{ghosh2018ricci} studied $*$-Ricci soliton and gradient almost $*$-Ricci soliton on contact metric manifolds. Later on, several mathematician studied $*$-Ricci soliton on contact and almost contact metric manifolds (e.g., see \cite{dai2019non,dai2019ricci,venkatesh2019ricci,wang2020contact}). Furthermore, Venkatesh et al. \cite{venkatesh2019ricci} proved that if an $\eta$-Einstein Kenmotsu manifold admits a $*$-Ricci soliton, then it is Einstein. Recently, Wang \cite{wang2020contact} proved that if the metric of a Kenmotsu $3$-manifold represents a $*$-Ricci soliton, then the manifold is locally isometric to the hyperbolic space $\mathbb{H}^3(-1)$. Motivated by the above background, in the present paper we mainly consider almost $*$-Ricci soliton on Kenmotsu manifold.
    
    The structure of this paper is the following. After collecting some basic definition and facts on almost contact metric manifolds in Section $2$, we consider almost $*$-Ricci solitons on almost contact metric manifolds in Section $3$.
     
    \section{Preliminaries}
    Let $M^{2n+1}$ be a $(2\,n + 1)$-dimensional smooth manifold. An \textit{almost contact structure} on $M^{2n+1}$ is a triple $(\varphi,\xi,\eta)$, where $\varphi$ is a $(1,1)$-tensor, $\xi$ is a vector field (called \textit{Reeb vector field}) and $\eta$ is a 1-form, satisfying
    \begin{align}\label{2.1}
    \varphi^2 = -id + \eta\otimes \xi,\quad \eta(\xi) = 1,
    \end{align}
    where $id$ denotes the identity endomorphism. A smooth manifold $M$ with such a structure is called an \textit{almost contact manifold}. It follows from \eqref{2.1} that
    \begin{equation*}
    \varphi(\xi) = 0,\quad
    \eta \circ \varphi = 0,\quad
    {\rm rank}(\varphi)=2n.
    \end{equation*} 
    A Riemannian metric $g$ on $M$ is said to be \textit{compatible} with an almost contact structure $(\varphi,\xi,\eta)$ if
    \[
    g(\varphi X, \varphi Y) = g(X,Y) - \eta(X)\,\eta(Y)
    \]
    for any $X\in\mathfrak{X}_M$, where $\mathfrak{X}_M$ is the Lie algebra of all vector fields on $M$.
    An almost contact manifold endowed with a compatible Riemannian metric $g$ is said to be an \textit{almost contact metric manifold} and is denoted by $M(\varphi,\xi,\eta,g)$.
    The \textit{fundamental $2$-form} $\Phi$ on $M(\varphi,\xi,\eta,g)$ is defined by $\Phi(X,Y)=g(X,\varphi Y)$ for any $X\in\mathfrak{X}_M$.
    An almost contact metric manifold satisfying $d\eta = 0$ and $d\Phi = 2\,\eta\wedge\Phi$ is said to be an \textit{almost Kenmotsu manifold} (e.g.,~\cite{dileo2007almost,dileo2009almost}).
    An almost contact metric structure on $M$ is said to be \textit{normal} if
    the almost complex structure $J$ on $M^{2n+1}\times \mathbb{R}$ defined by
    $$J(X,f\frac{d}{dt})=(\varphi\,X-f\xi,\eta(X)\frac{d}{dt}),$$
    where $f$ is a real function on $M\times \mathbb{R}$, is \textit{integrable}, or the tensor $N_{\varphi}=[\varphi,\varphi] + 2d\eta\otimes \xi$ vanishes on $M$, where $[\varphi,\varphi]$ denotes the \textit{Nijenhuis tensor} of $\varphi$.
    A normal almost Kenmotsu manifold is said to be a \textit{Kenmotsu manifold} (see \cite{kenmotsu1972class}); equivalently, an almost contact metric manifold is said to be a \textit{Kenmotsu manifold} (see \cite{kenmotsu1972class}) if
    \begin{align}\label{2.2}
    (\nabla_{X}\,\varphi)Y=g(\varphi X,Y)\,\xi -\eta(Y)\,\varphi X
    \end{align}
    for any $X,\,Y\in\mathfrak{X}_M$, where $\nabla$ is the \textit{Levi-Civita connection} of $g$.
    The following formulas hold on Kenmotsu manifolds, see~\cite{ghosh2019ricci,kenmotsu1972class}:
    \begin{align}\label{2.3}
    \nabla_{X}\,\xi&= X - \eta(X)\xi,\\
    \label{2.4}
    R(X, Y)\xi&= \eta(X)Y - \eta(Y)X,\\
    \label{2.6}
     (\nabla_{\xi}Q)X&=-2\,QX -4n X,\\
    \label{2.5}
    Q\xi&= -2n\xi
    \end{align}
    for all $X,Y\in\mathfrak{X}_M$, where $R$ and $Q$ denote, respectively, the \textit{curvature tensor} and the \textit{Ricci operator} of $g$ associated with the Ricci tensor and given by
    $\Ric(X,Y)=g(QX,Y)$ for
    $X,Y\in\mathfrak{X}_M$.
    \begin{lem}\label{lem2.1}
    	(Lemma 4.2 of \cite{ghosh2020ricci}) Let $M^{2n+1}(\varphi,\xi,\eta,g)$ be a Kenmotsu manifold. If $\{e_i\}_{1\leq i \leq 2n+1}$ be an orthonormal frame on $M$ then we have for all $X\in\mathfrak{X}_M$
    	\begin{align*}
    		(i)\,\,\,\sum_{\,i=1}^{2n+1}g((\nabla_XQ)\varphi\,e_{i},e_{i})
    		&=0,\\
    	(ii)\,\,\,\sum_{\,i=1}^{2n+1}g((\nabla_{\varphi\,e_{i}}Q)\varphi X,e_{i})
    		&=-\frac{1}{2}X(r) - \{r+2n(2n+1)\}\eta(X).
    	\end{align*}
    \end{lem}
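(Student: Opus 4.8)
\noindent\emph{Plan of proof.} Part (i) is immediate once one reads the left‐hand side correctly: $\sum_i g((\nabla_XQ)\varphi e_i,e_i)$ is the trace of the endomorphism $(\nabla_XQ)\circ\varphi$. Since $\Ric$ is symmetric the Ricci operator $Q$ is $g$‑self‑adjoint, and because $\nabla$ is metric so is $\nabla_XQ$; on the other hand $\varphi$ is $g$‑skew‑adjoint (this is exactly the antisymmetry of $\Phi(X,Y)=g(X,\varphi Y)$, which follows from $g(\varphi\,\cdot,\varphi\,\cdot)=g(\cdot,\cdot)-\eta\otimes\eta$). The trace of the composition of a self‑adjoint operator $A$ with a skew‑adjoint operator $B$ vanishes, because $\tr(AB)=\tr((AB)^{t})=\tr(B^{t}A^{t})=-\tr(BA)=-\tr(AB)$; hence the sum in (i) is $0$.

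For (ii) I would begin with a bookkeeping reduction. Using self‑adjointness of $\nabla_{\varphi e_i}Q$ the sum equals $g(\varphi X,W)$ with $W:=\sum_i(\nabla_{\varphi e_i}Q)e_i$, and evaluating $W$ in a $\varphi$‑adapted orthonormal frame $\{e_1,\dots,e_n,\varphi e_1,\dots,\varphi e_n,\xi\}$ using $\varphi^{2}=-\mathrm{id}+\eta\otimes\xi$ and $\varphi\xi=0$ yields $W=-\Div(Q\varphi)$. Next, the Leibniz rule for $\nabla$ on $(1,1)$‑tensors, together with \eqref{2.2} and $Q\xi=-2n\xi$ from \eqref{2.5}, shows that the ``$\nabla\varphi$'' contributions drop out, so that $\Div(Q\varphi)=\sum_i(\nabla_{e_i}Q)(\varphi e_i)$ and $\Div(\varphi Q)=\varphi(\Div Q)$; and by the twice‑contracted second Bianchi identity $\Div Q=\tfrac12\nabla r$, whence $\Div(\varphi Q)=\tfrac12\varphi\nabla r$. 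At this stage (ii) is seen to be equivalent to
\begin{equation*}
\Div(Q\varphi)=\tfrac12\,\varphi\,\nabla r,\qquad\text{i.e.}\qquad \Div\!\big(Q\varphi-\varphi Q\big)=0 .
\end{equation*}
Granting this, one has $W=-\tfrac12\varphi\nabla r$, hence $\sum_i g((\nabla_{\varphi e_i}Q)\varphi X,e_i)=g(\varphi X,-\tfrac12\varphi\nabla r)=-\tfrac12X(r)+\tfrac12\eta(X)\,\xi(r)$, and the proof closes by substituting $\xi(r)=-2\{r+2n(2n+1)\}$, which is obtained by taking the metric trace of \eqref{2.6}.

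It remains to establish $\Div(Q\varphi-\varphi Q)=0$, and this is the step I expect to be the main obstacle. One should be aware that manipulating only the Bianchi identities is circular here (it merely re‑expresses $\Div(Q\varphi)$ in terms of itself), so the Kenmotsu curvature must genuinely enter. My plan is to differentiate \eqref{2.4} with the help of \eqref{2.3}, obtaining
\begin{equation*}
(\nabla_ZR)(X,Y)\xi=g(Z,X)Y-g(Z,Y)X-R(X,Y)Z
\end{equation*}
together with its companions (e.g.\ $(\nabla_ZR)(X,\xi)Y$, obtained via the curvature symmetries), and to combine these with the once‑contracted second Bianchi identity $g((\nabla_UQ)V,Y)-g((\nabla_VQ)U,Y)=\sum_k g((\nabla_{e_k}R)(U,V)Y,e_k)$ and the standard relations between $R$, $Q$ and $r$ (notably $\sum_kR(X,e_k)e_k=QX$ and $Q\xi=-2n\xi$). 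Contracting over the $\varphi$‑adapted frame should then reduce $\Div(Q\varphi-\varphi Q)$ to an expression built only from $\nabla r$ and $\xi$, and show it vanishes. The delicate point is the bookkeeping of this contraction: a priori $*$‑Ricci–type curvature terms appear, and one must arrange the $\varphi$‑adapted splitting and the curvature symmetries so that they cancel; it may in fact be cleanest to first derive the Kenmotsu expression for the $*$‑Ricci tensor and contract that.
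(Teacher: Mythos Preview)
First, note that the paper does not actually prove this lemma: it is quoted verbatim as Lemma~4.2 of \cite{ghosh2020ricci}, so there is no in‑paper argument to compare against. I therefore assess your proposal on its own merits.

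Your proof of (i) is correct and is the cleanest possible: $\nabla_XQ$ is $g$‑self‑adjoint, $\varphi$ is $g$‑skew‑adjoint, so $\tr\big((\nabla_XQ)\circ\varphi\big)=0$.

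For (ii), all of your bookkeeping reductions are valid. In a $\varphi$‑adapted frame one indeed has
\[
\sum_i(\nabla_{\varphi e_i}Q)e_i=-\sum_i(\nabla_{e_i}Q)(\varphi e_i),
\]
the $\nabla\varphi$ contributions vanish exactly as you say (using \eqref{2.2}, $\tr(\varphi Q)=0$, and $Q\xi=-2n\xi$), and the contracted Bianchi identity together with $\xi(r)=-2\{r+2n(2n+1)\}$ (trace of \eqref{2.6}) produces the claimed right‑hand side. So the architecture is sound.

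The only genuine gap is the step you yourself flag: establishing $\Div(Q\varphi-\varphi Q)=0$. Your proposed route through $(\nabla_ZR)(X,Y)\xi$ and the once‑contracted Bianchi identity can be made to work, but it is far more laborious than necessary. On a Kenmotsu manifold one has the \emph{pointwise} identity $Q\varphi=\varphi Q$, which renders that divergence trivially zero. This commutation follows quickly from the curvature formula you will find used at the start of the proof of Lemma~\ref{lem3.1},
\[
R(X,Y)\varphi Z-\varphi R(X,Y)Z=g(Y,Z)\varphi X-g(X,Z)\varphi Y+g(X,\varphi Z)Y-g(Y,\varphi Z)X:
\]
contract this over $X$ and $Z$ (set $X=Z=e_i$ and sum) and use \eqref{2.4}, $\varphi\xi=0$, and the skew‑adjointness of $\varphi$ to obtain $-\Ric(Y,\varphi W)=\Ric(\varphi Y,W)$ for all $Y,W$, i.e.\ $Q\varphi=\varphi Q$. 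With this in hand your argument for (ii) closes in one line; there is no need to invoke $*$‑Ricci–type curvature terms or further Bianchi manipulations.
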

    In view of the first Bianchi's identity, the \textit{$*$-Ricci tensor} on almost contact metric manifold can be written as (see \cite{blair2010riemannian})
    \begin{align}\label{2.11A}
    \Ric^*(X,Y)=-\frac{1}{2} \sum_{i}g(R(X,\varphi Y)e_i,\varphi e_i)=\sum_{i}g(R(e_i,X)\varphi X,\varphi e_i)
    \end{align}
    for all $X$,$Y\in\mathfrak{X}_M$. Note that $\Ric^*$ is not symmetric. By contraction of \eqref{2.11A}, the \textit{$*$-scalar curvature} is given by $r^*=\sum_{i}\Ric^*(e_i,e_i)$. If the $*$-Ricci tensor $\Ric^{*}$ is a constant multiple of the Riemannian metric $g$ we say that the manifold is \textit{$*$-Einstein}. Moreover, the $*$-scalar curvature is not constant on a \textit{$*$-Einstein manifold}.
    A contact metric manifold $M^{2n+1}$ is called \textit{$\eta$-Einstein}, if its Ricci tensor has the following form
    \begin{align}\label{2.11}
    \Ric = \alpha\,g + \beta\,\eta\otimes\eta
    \end{align}
    where $\alpha$ and $\beta$ are smooth functions on $M$.
    For an $\eta$-Einstein $K$-contact manifold of dimension $>3$, these $\alpha$ and $\beta$ are constant (e.g., \cite{yano1984structures}), but for an $\eta$-Einstein Kenmotsu manifold this is not true (see details \cite{kenmotsu1972class}).
    The \textit{divergence of a tensor field} $K$ of type $(1,s)$ is defined by
    \[
    (\Div K)(X_1,X_2,\dots,X_s)=\sum\nolimits_{\,i=1}^{2n+1} g((\nabla_{e_i}K)(X_1,X_2,\dots,X_s),e_i)
    \]
    for all $X_1,X_2,\dots,X_s \in\mathfrak{X}_M$. It is well known that for an arbitrary $X\in\mathfrak{X}_M$,
    $\pounds_{X}f = X(f) = g(X,D f)$
    for some smooth function $f$, where $D$ is the gradient operator of $g$. The \textit{Hessian} of a smooth function $f$ on $M$ is defined
    for all $X,Y\in\mathfrak{X}_M$ by
    \[
    {\rm Hess}_f(X,Y) = \nabla^2f(X,Y) = g(\nabla_{X}D f,Y).
    \]
    
    \section{Main Results}
    Here we first prove the following.
    \begin{lem}\label{lem3.1}
    	Let $M^{2n+1}(\varphi,\xi,\eta,g)$ be a Kenmotsu manifold. If $g$ represents an almost $*$-Ricci soliton with the potential vector field $V$, then we have for all $X$,$Y\in\mathfrak{X}_M$
    	\begin{align}\label{3.1}
    	(\pounds_V\,R)(X,Y)\xi=&2\Big\{\eta(X)QY-\eta(Y)QX\Big\}+2\Big\{(\nabla_XQ)Y-(\nabla_YQ)X\Big\}\notag\\
    	&+ 4n\Big\{\eta(X)Y - \eta(Y)X\Big\}+ g(\xi,\nabla_{X}D\lambda)Y - g(\xi,\nabla_{Y}D\lambda)X\notag\\
    	&- \eta(Y)\,\nabla_{X}D\lambda + \eta(X)\,\nabla_{Y}D\lambda.
    	\end{align}
    \end{lem}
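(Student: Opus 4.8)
The plan is to compute $\pounds_V R$ from $\pounds_V\nabla$, which in turn is read off from the soliton equation \eqref{1.1}. First, \eqref{1.1} gives the deformation tensor $\pounds_V g=2\lambda g-2\,\Ric^{*}$. Recall the classical identity, valid for any vector field $V$,
\begin{equation*}
2\,g\big((\pounds_V\nabla)(X,Y),Z\big)=(\nabla_X\pounds_V g)(Y,Z)+(\nabla_Y\pounds_V g)(X,Z)-(\nabla_Z\pounds_V g)(X,Y),
\end{equation*}
where $\pounds_V\nabla$ denotes the symmetric $(1,2)$-tensor $(\pounds_V\nabla)(X,Y)=\nabla_X\nabla_Y V-\nabla_{\nabla_X Y}V+R(V,X)Y$. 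Since $\pounds_V g=2\lambda g-2\,\Ric^{*}$ implies $(\nabla_X\pounds_V g)(Y,Z)=2X(\lambda)\,g(Y,Z)-2(\nabla_X\Ric^{*})(Y,Z)$, this expresses $\pounds_V\nabla$ through $D\lambda$ and $\nabla\Ric^{*}$.

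Next I evaluate at $Y=\xi$. Here I use the standard relation on a Kenmotsu manifold, $\Ric^{*}(X,Y)=\Ric(X,Y)+(2n-1)\,g(X,Y)+\eta(X)\eta(Y)$, which follows from \eqref{2.2} and \eqref{2.11A}; in particular $\Ric^{*}(X,\xi)=0$ by \eqref{2.5}. Together with \eqref{2.3}, this gives $(\nabla_X\Ric^{*})(\xi,Z)=(\nabla_Z\Ric^{*})(X,\xi)=-\Ric^{*}(X,Z)$, whereas \eqref{2.6} and $\nabla_\xi\xi=0$ yield $(\nabla_\xi\Ric^{*})(X,Z)=-2\,\Ric^{*}(X,Z)-2\,g(X,Z)+2\,\eta(X)\eta(Z)$. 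Substituting these and re-expressing $\Ric^{*}$ through $\Ric$, the curvature contributions collapse to $2\,\Ric(X,Z)+4n\,g(X,Z)$, so that, writing $P(X):=(\pounds_V\nabla)(X,\xi)$ (a $(1,1)$-tensor in $X$),
\begin{equation*}
P(X)=X(\lambda)\,\xi+\xi(\lambda)\,X-\eta(X)\,D\lambda+2\,QX+4n\,X .
\end{equation*}

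Finally I apply the classical commutation formula $(\pounds_V R)(X,Y)Z=(\nabla_X\pounds_V\nabla)(Y,Z)-(\nabla_Y\pounds_V\nabla)(X,Z)$ with $Z=\xi$. Writing $(\nabla_X\pounds_V\nabla)(Y,\xi)=\nabla_X\big(P(Y)\big)-P(\nabla_X Y)-(\pounds_V\nabla)(Y,\nabla_X\xi)$ and using $\nabla_X\xi=X-\eta(X)\xi$ from \eqref{2.3} together with the symmetry of $\pounds_V\nabla$, the term $(\pounds_V\nabla)(X,Y)$ cancels under antisymmetrization in $X,Y$, and one obtains
\begin{equation*}
(\pounds_V R)(X,Y)\xi=(\nabla_X P)Y-(\nabla_Y P)X+\eta(X)P(Y)-\eta(Y)P(X).
\end{equation*}
Differentiating the explicit $P$ above (using \eqref{2.3} for $\nabla\xi$ and $\nabla\eta$, and the definition of $\nabla Q$): the $\xi$-valued $D\lambda$-terms cancel, the $g(Y,\nabla_X D\lambda)\xi$-terms cancel by symmetry of the Hessian of $\lambda$, the $QX$- and $X$-parts produce $2\{\eta(X)QY-\eta(Y)QX\}+2\{(\nabla_X Q)Y-(\nabla_Y Q)X\}+4n\{\eta(X)Y-\eta(Y)X\}$, and the remaining first-order $\lambda$-terms recombine, via the identity $g(\xi,\nabla_X D\lambda)=X\big(\xi(\lambda)\big)-X(\lambda)+\eta(X)\,\xi(\lambda)$, into $g(\xi,\nabla_X D\lambda)Y-g(\xi,\nabla_Y D\lambda)X-\eta(Y)\nabla_X D\lambda+\eta(X)\nabla_Y D\lambda$. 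This is exactly \eqref{3.1}.

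The conceptual content is just the two classical identities above together with the Kenmotsu structure equations \eqref{2.2}--\eqref{2.6}; the main obstacle is bookkeeping — in the last two steps one must carefully track the several first-order $\lambda$-terms and the $\eta$-weighted pieces and verify that all spurious contributions cancel.
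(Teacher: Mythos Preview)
Your proof is correct and follows essentially the same route as the paper: both derive the Kenmotsu identity $\Ric^{*}=\Ric+(2n-1)g+\eta\otimes\eta$, use it together with the Koszul-type formula for $\pounds_V\nabla$ to obtain the key expression $(\pounds_V\nabla)(X,\xi)=2QX+4nX+\xi(\lambda)X+X(\lambda)\xi-\eta(X)D\lambda$ (the paper's \eqref{3.8}), and then plug into the commutation formula $(\pounds_V R)(X,Y)Z=(\nabla_X\pounds_V\nabla)(Y,Z)-(\nabla_Y\pounds_V\nabla)(X,Z)$, simplifying via $X(\xi(\lambda))=X(\lambda)-\xi(\lambda)\eta(X)+g(\xi,\nabla_X D\lambda)$. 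The only difference is organizational: you compute the covariant derivatives of $\Ric^{*}$ at $\xi$ directly, whereas the paper first rewrites the soliton equation as \eqref{3.3} and differentiates that.
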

    \begin{proof} First we recalling a formula on Kenmotsu manifold \cite{kenmotsu1972class}:
    	\begin{align*}
    		R(X,Y)\varphi\, Z - \varphi\,R(X,Y)Z = g(Y,Z)\,\varphi X - g(X,Z)\,\varphi Y + g(X,\varphi Z)Y - g(Y,\varphi Z)X.
    	\end{align*}
     Taking its scalar product with $\varphi\,U$ and using \eqref{2.1}, \eqref{2.4}, we acquire
    	\begin{align}\label{3.2}
    	g(R(X,Y)\varphi\, Z,\varphi U) - g(R(X,Y)Z,U)&= g(Y,Z)\,g(X,U)\nonumber\\  
    	&- g(X,Z)\,g(Y,U)
    	+ g(X,\varphi Z)\,g(Y,\varphi U)\notag\\
    	&- g(Y,\varphi Z)\,g(X,\varphi U)
    	\end{align} 	
    	where we have used $\varphi$ is skew-symmetric. Considering a local orthonormal basis $\{e_i\}_{1\leq i \leq 2n+1}$ of tangent space at each point of $M$. Now substituting $e_i$ for $X$ and $Z$ in \eqref{3.2} and summing over $i:1\leq i\leq 2n+1$, we get
    	\begin{align*}
    		\Ric^*(Y,Z) = \Ric(Y,Z) + (2n-1)\,g(Y,Z) + \eta(Y)\eta(Z)
    	\end{align*}
    	for all $Z\in\mathfrak{X}_M$, where we have used \eqref{2.11A}. Then the soliton equation \eqref{1.1} can be written as
    	\begin{align}\label{3.3}
    	(\pounds_{V}\,g)(X,Y) + 2\,\Ric(X,Y) = 2\,\{\lambda -2n+1\}\,g(X,Y) - 2\,\eta(X)\eta(Y).
    	\end{align}
    	Taking its covariant derivative along $Z\in\mathfrak{X}_M$ and using \eqref{2.3}, we have
    	\begin{align}\label{3.4}
    	(\nabla_Z\,\pounds_{V}\,g)(X,Y) +2\,(\nabla_{Z}\,\Ric)(X,Y)&= 2\,Z(\lambda)\,g(X, Y)\nonumber\\
    	&-2\bigg\{\eta(X)g(Y,Z) + \eta(Y)g(X,Z)\notag\\
    	&- 2\,\eta(X)\eta(Y)\eta(Z)\bigg\}.
    	\end{align}
    	 Recall the commutation formula, [see p.~23 in \cite{yano1970integral}],
    	\begin{align}\label{3.4A}
    	(\pounds_{V}\,\nabla_{Z}\,g - \nabla_{Z}\pounds_{V}\,g - \nabla_{[V,Z]}\,g)(X,Y)=& -g((\pounds_{V}\,\nabla)(Z,X),Y)\notag\\
    	&-g((\pounds_{V}\,\nabla)(Z, Y), X).
    	\end{align}
    	Since Riemannian metric $g$ is parallel, it follows from the last equation and \eqref{3.4} that
    	\begin{align}\label{3.5}
    	\nonumber
    	g((\pounds_{V}\,\nabla)(Z,X),Y)+g((\pounds_{V}\,\nabla)(Z,Y),X)=&2\,Z(\lambda)\,g(X,Y) -2\,(\nabla_{Z}\Ric)(X,Y)\\
    	&-2\bigg\{\eta(X)g(Y,Z) + \eta(Y)g(X, Z)\notag\\
    	&- 2\,\eta(X)\eta(Y)\eta(Z)\bigg\}.
    	\end{align}
    	 Cyclically rearranging $X,\,\,Y$ and $Z$ in \eqref{3.5}, we obtain
    	\begin{align}\label{3.6}
    	\nonumber
    	g((\pounds_{V}\,\nabla)(X,Y),Z)=& (\nabla_{Z}\Ric)(X,Y)-(\nabla_{X}\Ric)(Y,Z)-(\nabla_{Y}\Ric)(Z,X)\\
    	\nonumber
    	+& X(\lambda)\,g(Y,Z) - Y(\lambda)\,g(X,Z) - Z(\lambda)\,g(X,Y)\\
    	-& 2\bigg\{\eta(Z)g(X, Y)-\eta(X)\eta(Y)\eta(Z)\bigg\}.
    	\end{align}
    	 In view of \eqref{2.3} and \eqref{2.5} one can compute the following formula
    	\begin{align}\label{3.7}
    	(\nabla_{X}Q)\xi=-QX -2n X
    	\end{align}
    	for all $X\in\mathfrak{X}_M$. Next substituting $Y$ by $\xi$ in \eqref{3.6} and using \eqref{2.6} and \eqref{3.7}, we derive that
    	\begin{align}\label{3.8}
    	(\pounds_{V}\,\nabla)(X,\xi) = 2\,QX + 4n\,X + \xi(\lambda) X + X(\lambda)\xi - \eta(X)\,D\lambda.
    	\end{align}
    	 Taking its covariant derivative of along $Y\in\mathfrak{X}_M$ and using \eqref{2.3}, we find
    	\begin{align}\label{3.9}
    	(\nabla_Y \pounds_V\,\nabla)(X, \xi) + (\pounds_V\,\nabla)(X, Y)&-\eta(Y)(\pounds_V\,\nabla)(X, \xi)\notag\\
    	&=2\,(\nabla_YQ)X + Y(\xi(\lambda))X
    	+ g(X,\nabla_{Y}D\lambda)\xi\notag\\
    	&- X(\lambda)\,\varphi^2Y -\eta(X)\,\nabla_{Y}D\lambda - g(X,Y)\,D\lambda\notag\\
    	&+ \eta(X)\eta(Y)\,D\lambda.
    	\end{align}
    	 Recalling the following commutation formula (see \cite{yano1970integral}, p.~23):
    	\begin{align}\label{3.9A}
    	(\pounds_{V}\,R)(X,Y)Z = (\nabla_{X}\pounds_{V}\,\nabla)(Y,Z) -(\nabla_{Y}\pounds_{V}\,\nabla)(X,Z).
    	\end{align}
    	Applying \eqref{3.9} in the previous formula, we obtain
    	\begin{align}\label{3.10}
    	(\pounds_V\,R)(X,Y)\xi=& \eta(X)(\pounds_V\,\nabla)(Y, \xi) - \eta(Y)(\pounds_V\,\nabla)(X, \xi)\nonumber\\
    	&+2\,\{(\nabla_XQ)Y - (\nabla_YQ)X\}
    	+ X(\xi(\lambda))Y - Y(\xi(\lambda))X \nonumber\\
    	&+ X(\lambda)\,\varphi^2Y - Y(\lambda)\,\varphi^2X
    	+\eta(X)\,\nabla_{Y}D\lambda -\eta(Y)\,\nabla_{X}D\lambda,
    	\end{align}
    	where we have used that $\pounds_V\,\nabla$ and Hess$_{\lambda}$ are symmetric. In view of \eqref{2.3}, it is easily seen that $X(\xi(\lambda)) = X(\lambda) -\xi(\lambda)\eta(X) + g(\xi,\nabla_{X}D\lambda)$. Substituting this into \eqref{3.10}, with the aid of \eqref{3.9}, we arrive at the desired result.
    \end{proof}

    \begin{lem}\label{lem3.2}
    	Let $M^{2n+1}(\varphi,\xi,\eta,g)$, $n>1$, be a Kenmotsu manifold. If $g$ represents an almost $*$-Ricci soliton with the potential vector field $V$ then we have 
    	\begin{itemize}
    	    \item [(i)] $\nabla_{\xi}D\lambda = \xi(\xi(\lambda))\,\xi$
    		\item[(ii)] $\nabla_{X}D\lambda = (\xi(\xi(\lambda)))-2\lambda)\, \varphi^2X + \xi(\xi(\lambda))\,\eta(X)\xi$
    	\end{itemize}
    	for all $X\in\mathfrak{X}_M$.
    \end{lem}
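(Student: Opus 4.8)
The plan is to distil from Lemma~\ref{lem3.1} a pointwise formula for $\nabla_X D\lambda$ in terms of $\nabla_\xi D\lambda$ and $\lambda$, and then to show that $\nabla_\xi D\lambda$ is a multiple of $\xi$; part (ii) is then immediate from (i). First I would set $Y=\xi$ in \eqref{3.1}: using $Q\xi=-2n\xi$, \eqref{2.6} and \eqref{3.7} all the terms containing $Q$ cancel, and after using the symmetry of $\mathrm{Hess}_\lambda$ together with $g(\xi,\nabla_\xi D\lambda)=\xi(\xi(\lambda))$ (recall $\nabla_\xi\xi=0$) one obtains
\[
(\pounds_V R)(X,\xi)\xi=g(X,\nabla_\xi D\lambda)\,\xi-\xi(\xi(\lambda))\,X-\nabla_X D\lambda+\eta(X)\,\nabla_\xi D\lambda .
\]
Independently, Lie differentiating the Kenmotsu identity $R(X,Y)\xi=\eta(X)Y-\eta(Y)X$ along $V$ and using the commutation rule gives $(\pounds_V R)(X,Y)\xi=(\pounds_V\eta)(X)Y-(\pounds_V\eta)(Y)X-R(X,Y)(\pounds_V\xi)$; since \eqref{3.3} forces $(\pounds_V g)(\xi,\cdot)=2\lambda\,\eta$, hence $(\pounds_V\eta)(X)=2\lambda\,\eta(X)+g(\pounds_V\xi,X)$ and, from $\pounds_V(\eta(\xi))=0$, $\eta(\pounds_V\xi)=-\lambda$, putting $Y=\xi$ collapses this to $(\pounds_V R)(X,\xi)\xi=2\lambda\,\varphi^2X$. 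Comparing the two expressions yields the master identity
\[
\nabla_X D\lambda=g(X,\nabla_\xi D\lambda)\,\xi+\bigl(2\lambda-\xi(\xi(\lambda))\bigr)X+\eta(X)\,\nabla_\xi D\lambda-2\lambda\,\eta(X)\,\xi ,
\]
which is exactly (ii) once (i) is known; taking its trace also records $\Delta\lambda=4n\lambda+(1-2n)\,\xi(\xi(\lambda))$, needed at the end.

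To pin down $\nabla_\xi D\lambda$, I would substitute the master identity back into \eqref{3.1} to eliminate the Hessian terms, equate the result with the $(\pounds_V\eta)$-formula above, and restrict to $X,Y$ orthogonal to $\xi$; all $\eta$-terms then drop and one is left with
\[
g(U,X)Y-g(U,Y)X-R(X,Y)(\pounds_V\xi)=2\bigl\{(\nabla_XQ)Y-(\nabla_YQ)X\bigr\},\qquad U:=\pounds_V\xi-\nabla_\xi D\lambda .
\]
Now I contract over the twisted frame, taking $X=\varphi e_i$, $Y=\varphi Z$, pairing with $e_i$ and summing over $i$. On the left $\sum_i g(R(\varphi e_i,\varphi Z)(\pounds_V\xi),e_i)=\Ric^*(\pounds_V\xi,Z)$ by the symmetries of $R$ and \eqref{2.11A}, which equals $\Ric(\pounds_V\xi,Z)+(2n-1)g(\pounds_V\xi,Z)-\lambda\,\eta(Z)$ by the $\Ric^*$-formula derived from \eqref{3.2} in the proof of Lemma~\ref{lem3.1}; on the right Lemma~\ref{lem2.1}(ii) evaluates $\sum_i g((\nabla_{\varphi e_i}Q)\varphi Z,e_i)$ while Lemma~\ref{lem2.1}(i) makes $\sum_i g((\nabla_{\varphi Z}Q)\varphi e_i,e_i)$ vanish. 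Collecting everything produces a first expression for $2n\,g(\pounds_V\xi,Z)+\Ric(\pounds_V\xi,Z)$ in terms of $Z(r)$, $g(\nabla_\xi D\lambda,Z)$, $r$, $\lambda$ and $\xi(\xi(\lambda))$.

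A second expression for the same quantity comes from computing $(\pounds_V\Ric)(Z,\xi)$ in two ways: differentiating $\Ric(Z,\xi)=-2n\,\eta(Z)$ gives $(\pounds_V\Ric)(Z,\xi)=-2n(\pounds_V\eta)(Z)-\Ric(Z,\pounds_V\xi)$, while tracing \eqref{3.1} over its first argument (using $\Div Q=\tfrac12\,Dr$, $Q\xi=-2n\xi$ and $\xi(r)=-2r-4n(2n+1)$, the latter from \eqref{2.6}) expresses $(\pounds_V\Ric)(Z,\xi)$ through $Z(r)$, $\Delta\lambda$ and $g(\xi,\nabla_Z D\lambda)=g(Z,\nabla_\xi D\lambda)$. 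Equating these, and then equating with the $\varphi$-frame expression above, the $Z(r)$-terms cancel; after substituting $\Delta\lambda=4n\lambda+(1-2n)\,\xi(\xi(\lambda))$ everything reduces to
\[
2(1-n)\,\bigl[\,g(\nabla_\xi D\lambda,Z)-\xi(\xi(\lambda))\,\eta(Z)\,\bigr]=0 .
\]
Since $n>1$ this forces $\nabla_\xi D\lambda=\xi(\xi(\lambda))\,\xi$, which is (i); feeding this into the master identity and rewriting with $\varphi^2X=-X+\eta(X)\xi$ gives (ii).

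I expect the genuine obstacle to be the $\varphi$-frame contraction in the second paragraph: one must reduce $\sum_i g(R(\varphi e_i,\varphi Z)(\pounds_V\xi),e_i)$ correctly (this is where \eqref{3.2}, equivalently \eqref{2.11A} together with the $\Ric^*$-identity, are essential) and one must check that the remaining twisted sums are exactly of the shapes handled by Lemma~\ref{lem2.1}(i) and (ii). This is also the only place where the hypothesis $n>1$ enters, through the overall factor $2(1-n)$ in the final reduction.
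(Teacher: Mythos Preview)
Your proposal is correct and follows essentially the same route as the paper: both combine Lemma~\ref{lem3.1} with the Lie derivative of \eqref{2.4} (the paper's \eqref{3.12}--\eqref{3.13}), then perform a straight contraction and a $\varphi$-twisted contraction (the latter handled via Lemma~\ref{lem2.1}) and subtract to isolate the factor $(n-1)$, yielding (i); the ``master identity'' you derive first is exactly the paper's \eqref{3.20}, which the paper obtains last to deduce (ii). The only cosmetic differences are that you reorder the steps (obtaining \eqref{3.20} before (i)) and recognize the twisted curvature sum as $\Ric^*(\pounds_V\xi,Z)$, whereas the paper converts $R(\varphi X,\varphi Y)$ directly via the Kenmotsu identity \eqref{3.15}.
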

    \begin{proof}
    From \eqref{1.1} and \eqref{2.5}, it follows that $(\pounds_V\,g)(X,\xi) = 2\lambda\,\eta(X)$. Using this in the co-variant derivative of \eqref{2.4} along $V$, we get
    	\begin{align}\label{3.12}
    	(\pounds_{V}\, R)(X,Y)\xi + R(X,Y)\pounds_V\xi=&2\lambda\Big\{\eta(X)Y - \eta(Y)X\Big\}\notag\\
    	&+ g(X,\pounds_V\xi)Y-g(Y,\pounds_V\xi)X
    	\end{align}
    	for all $X,Y\in\mathfrak{X}_M$. Proceeding, we invoke Lemma \ref{lem3.1} to infer
    	\begin{align}\label{3.13}
    	2(\lambda - 2n)\,\{\eta(X)Y&-\eta(Y)X\} 
    	+ g(X,\pounds_V\xi)Y - g(Y,\pounds_V\xi)X\notag\\
    	=&2\Big\{(\nabla_XQ)Y - (\nabla_YQ)X\Big\} + 2\Big\{\eta(X)\,QY - \eta(Y)\,QX\Big\}\notag\\
    	&+ g(\xi,\nabla_{X}D\lambda)Y- g(\xi,\nabla_{Y}D\lambda)X\notag\\
    	&- \eta(Y)\,\nabla_{X}D\lambda + \eta(X)\,\nabla_{Y}D\lambda + R(X,Y)\pounds_V\xi.
    	\end{align}
    	 At this point, contracting of second Bianchi's identity we derive the formula: $\tr\{Y\longrightarrow (\nabla_YQ)X\}=\frac{1}{2}\,X(r)$. Applying this in the contraction of \eqref{3.13} over $Y$, we compute
    	\begin{align}\label{3.14}
    	X(r)+(2n-1)g(\xi, \nabla_{X}D\lambda)&+\eta(X)\, \Div D\lambda-\Ric(X,\pounds_V\xi)\notag\\
    	&= 2\{2n(\lambda-2n-1)-r\}\,\eta(X) + 2n\,g(X,\pounds_V\xi).
    	\end{align}
    	 Inserting $X=\varphi\,X$ and $Y=\varphi\,Y$ in \eqref{3.13} and recalling the formula (e.g., \cite{kenmotsu1972class}):
    	\begin{align}\label{3.15}
    	R(\varphi X,\varphi Y)Z = &R(X,Y)Z + g(Y,Z)X - g(X,Z)Y\notag\\
    	&+ g(Y,\varphi Z)\,\varphi X - g(X,\varphi Z)\,\varphi Y
    	\end{align}
    	We deduce that
    	\begin{align}\label{3.16}
    	g(X,\pounds_V\xi)Y - g(Y,\pounds_V\xi)X=&2\Big\{(\nabla_{\varphi X}Q)\,\varphi Y - (\nabla_{\varphi Y}Q)\,\varphi X\Big\}\notag\\
    	&+ g(\xi,\nabla_{\varphi X}D\lambda)\,\varphi Y - g(\xi,\nabla_{\varphi Y}D\lambda)\,\varphi X\notag\\
    	&+ R(X,Y)\pounds_V\xi. 
    	\end{align}
    	 In view of $\xi(\xi(\lambda)) = g(\xi,\nabla_{\xi}D\lambda)$, $\nabla_{\xi}\xi=0$ (follows from \eqref{2.3}) and \eqref{2.1}, one can compute $g(\varphi^2\,X,\nabla_{\xi}D\lambda) = \xi(\xi(\lambda))\,\eta(X) - g(\xi,\nabla_XD\lambda)$. Thus, at this point, contracting \eqref{3.16} over $Y$ and using the Lemma \ref{lem2.1}, we achieve
    	\begin{align}\label{3.17}
    	2n\,g(X,\pounds_V\xi)=&X(r) + 2\{r+2n(2n+1)\}\eta(X) + g(\xi,\nabla_{X}D\lambda)\notag\\
    	&- \xi(\xi(\lambda))\eta(X) - \Ric(X,\pounds_V\xi).
    	\end{align}
    	 Taking into account \eqref{3.14}, as well as \eqref{3.17}, it suffices to show that
    	\begin{align}\label{3.18}
    	2(n-1) g(\xi,\nabla_XD\lambda) =\Big\{4n\lambda - \xi(\xi(\lambda)) - \Div D\lambda\Big\}\eta(X).
    	\end{align}
    	 Noting that from \eqref{3.18}, we get
    	$\Div (D\lambda)=4n\,\lambda-(2n-1)\,\xi(\xi(\lambda))$, this reduces \eqref{3.18} to
    	\begin{align}\label{3.19}
    	(n-1)\Big\{g(\xi,\nabla_XD\lambda) - \xi(\xi(\lambda))\,\eta(X)\Big\}=0,
    	\end{align}
    	which establishes the formula $(i)$. In addition, it follows from \eqref{2.4} that 
    	$R(X,\xi)\pounds_V\xi = g(X,\pounds_V\xi)\xi - g(\xi,\pounds_V\xi)X$. Thus, replacing $Y$ by $\xi$ in \eqref{3.13} and applying \eqref{2.1}, \eqref{2.6}, \eqref{2.5} and \eqref{3.7}, we obtain
    	\begin{align}\label{3.20}
    	2\lambda\,\varphi^2X=g(\xi,\nabla_XD\lambda)\xi - g(\xi,\nabla_{\xi}D\lambda)X - \nabla_XD\lambda +\eta(X)\, \nabla_{\xi}D\lambda
    	\end{align}
    	for all $X\in\mathfrak{X}_M$. As ${\rm Hess}_{\lambda}$ is symmetric, it suffices to combine the first result of Lemma \ref{lem3.1} and \eqref{3.20} to arrive at the desired result.
    \end{proof}
    
    \begin{theo}\label{thm3.2}
    	Let $M^{2n+1}(\varphi, \xi, \eta, g)$, $n>1$, be a Kenmotsu manifold. If $g$ represents an almost $*$-Ricci soliton with the potential vector field $V$ is a Jacobi along $\xi$, then it is a steady $*$-Ricci soliton.
    \end{theo}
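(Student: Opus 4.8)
The plan is to combine the Jacobi hypothesis with equation \eqref{3.8} and Lemma \ref{lem3.2}. Since $\nabla_\xi\xi=0$ by \eqref{2.3}, the integral curves of $\xi$ are geodesics, so the assumption that $V$ is a Jacobi field along $\xi$ means $\nabla_\xi\nabla_\xi V+R(V,\xi)\xi=0$; by \eqref{2.4} this is equivalent to $\nabla_\xi\nabla_\xi V=V-\eta(V)\xi$. Also, \eqref{2.3} gives $\pounds_V\xi=[V,\xi]=\nabla_V\xi-\nabla_\xi V=V-\eta(V)\xi-\nabla_\xi V$. I would then differentiate this last relation along $\xi$ and substitute the Jacobi identity to obtain
\begin{align*}
\nabla_\xi(\pounds_V\xi)=-\pounds_V\xi-\xi(\eta(V))\,\xi .
\end{align*}

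The next step is to evaluate $(\pounds_V\nabla)(\xi,\xi)$ in two ways. First, from the definition of the Lie derivative of a connection together with $\nabla_\xi\xi=0$ we have $(\pounds_V\nabla)(\xi,\xi)=-\nabla_{\pounds_V\xi}\xi-\nabla_\xi(\pounds_V\xi)$; inserting $\nabla_Z\xi=Z-\eta(Z)\xi$ and the previous display, the $\pounds_V\xi$-terms cancel and one is left with $(\pounds_V\nabla)(\xi,\xi)=\big(\eta(\pounds_V\xi)+\xi(\eta(V))\big)\xi$. A short computation shows $\eta(\pounds_V\xi)=-g(\xi,\nabla_\xi V)$ while $\xi(\eta(V))=\xi\big(g(\xi,V)\big)=g(\xi,\nabla_\xi V)$, so this bracket vanishes and $(\pounds_V\nabla)(\xi,\xi)=0$. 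Second, putting $X=\xi$ in \eqref{3.8} and using $Q\xi=-2n\xi$ from \eqref{2.5} yields $(\pounds_V\nabla)(\xi,\xi)=2\xi(\lambda)\xi-D\lambda$. Comparing the two expressions gives $D\lambda=2\xi(\lambda)\xi$, and pairing with $\xi$ forces $\xi(\lambda)=0$, hence $D\lambda=0$ and $\lambda$ is constant. Then $\xi(\xi(\lambda))=0$, so Lemma \ref{lem3.2}(ii) collapses to $0=\nabla_XD\lambda=-2\lambda\,\varphi^2X$ for all $X$; choosing $X$ with $\eta(X)=0$ and $X\neq0$ (possible since $n>1$), so that $\varphi^2X=-X\neq0$, we conclude $\lambda=0$, i.e.\ the almost $*$-Ricci soliton is steady.

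The main obstacle is the cancellation leading to $(\pounds_V\nabla)(\xi,\xi)=0$: one has to track the $\eta$-components of $\pounds_V\xi$ and of $\nabla_\xi V$ carefully so that, after the Jacobi equation is invoked, everything except the vanishing bracket $\eta(\pounds_V\xi)+\xi(\eta(V))$ disappears. Once this identity is secured, the remainder is a direct application of \eqref{3.8} and Lemma \ref{lem3.2}, and the restriction $n>1$ enters only through the latter.
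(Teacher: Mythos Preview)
Your argument is correct and shares the paper's core idea: use the Jacobi hypothesis to show $(\pounds_V\nabla)(\xi,\xi)=0$, compare with \eqref{3.8} evaluated at $X=\xi$ to obtain $D\lambda=2\,\xi(\lambda)\,\xi$, conclude that $\lambda$ is constant, and then invoke Lemma~\ref{lem3.2} to force $\lambda=0$.

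Where you differ is in economy. The paper reaches $(\pounds_V\nabla)(\xi,\xi)=0$ via Yano's formula \eqref{3.29}, which gives it in one line; your route through $\pounds_V(\nabla_\xi\xi)-\nabla_{\pounds_V\xi}\xi-\nabla_\xi(\pounds_V\xi)$ is longer but perfectly valid. On the other hand, the paper spends considerable effort (the curvature identities \eqref{3.24}--\eqref{3.28}) deriving the auxiliary relation $Y(\lambda)=\xi(\lambda)\,\eta(Y)$, and only then combines it with $D\lambda=2\,\xi(\lambda)\,\xi$ to extract $\xi(\lambda)=0$. You bypass all of that by simply pairing $D\lambda=2\,\xi(\lambda)\,\xi$ with $\xi$, which already yields $\xi(\lambda)=2\,\xi(\lambda)$ and hence $\xi(\lambda)=0$. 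Your final step is also cleaner: once $D\lambda=0$, Lemma~\ref{lem3.2}(ii) reads $0=-2\lambda\,\varphi^2X$ directly, whereas the paper differentiates $\varphi\,D\lambda=0$ to reach the same conclusion. One cosmetic remark: the parenthetical ``possible since $n>1$'' after choosing $X$ with $\eta(X)=0$ is slightly misplaced, since such an $X$ exists for any $n\geq1$; as you correctly note at the end, the hypothesis $n>1$ is needed only for Lemma~\ref{lem3.2}.
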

    \begin{proof}
    	First of all, invoking Lemma \ref{lem3.2}, we derive the following expression
    	\begin{align}\label{3.24}
    	R(X, Y)D\lambda=&2\{Y(\lambda)-Y(\xi(\xi(\lambda)))\}\,\varphi^2X - \Big\{X(\lambda)-X(\xi(\xi(\lambda)))\Big\}\,\varphi^2Y\notag\\
    	&- 2(\lambda-\xi(\xi(\lambda)))\Big\{\eta(Y)X-\eta(X)Y\Big\}\notag\\
    	&+ X(\xi(\xi(\lambda)))\,\eta(Y)\xi -Y(\xi(\xi(\lambda)))\,\eta(X)\xi
    	\end{align}
    	for all $X,Y\in\mathfrak{X}_M$. In view of \eqref{2.1}, \eqref{2.3} and \eqref{2.5}, last equation provides $X(\xi(\xi(\lambda))) = \xi(\xi(\xi(\lambda)))\,\eta(X)$. Applying this in \eqref{3.24}, we have
    	\begin{align}\label{3.25}
    	R(X, Y)D\lambda=& 2\Big\{X(\lambda)Y - Y(\lambda)X\Big\} - 2\Big\{X(\lambda)\eta(Y)\xi - Y(\lambda)\eta(X)\xi\Big\}\nonumber\\
    	+&\Big\{\xi(\xi(\xi(\lambda))) - 2(\lambda-\xi(\xi(\lambda)))\Big\}\big\{\eta(Y)X - \eta(X)Y\big\}.
    	\end{align}
    	 Contracting this over $X$, we infer
    	\begin{align}\label{3.26}
    	\Ric(Y,D\lambda)=&-2(2n+1)Y(\lambda)+2\,\xi(\lambda)\,\eta(Y)\notag\\
    	&+ 2n\Big\{\xi(\xi(\xi(\lambda)))-2(\lambda-\xi(\xi(\lambda)))\Big\}\,\eta(Y).
    	\end{align}
    	 On the other hand, inserting $X=\varphi X$ and $Y=\varphi Y$ in \eqref{3.24} and using the well known formula, see \cite{kenmotsu1972class}:
    	\begin{align*}
    		R(\varphi X,\varphi Y)Z = &R(X, Y)Z + g(Y,Z)X - g(X,Z)Y\notag\\
    		&+ g(Y,\varphi Z)\varphi X - g(X,\varphi Z)\varphi Y.
    	\end{align*}
    	We acquire
    	\begin{align}\label{3.27}
    	R(X, Y)D\lambda = X(\lambda)Y - Y(\lambda)X +\Big\{g(\varphi X,D\lambda)\,\varphi Y - g(\varphi Y,D\lambda)\,\varphi X\Big\}. 
    	\end{align}
    	 Contracting this over $X$ and recalling \eqref{3.25}, we achieve
    	\begin{align}\label{3.28}
    	(2n+1)Y(\lambda) - \xi(\lambda)\eta(Y) = 2n\Big\{\xi(\xi(\xi(\lambda)))-2(\lambda-\xi(\xi(\lambda)))\Big\}\eta(Y).
    	\end{align}
    	 At this point, replacing $Y$ by $\varphi Y$ in \eqref{3.28}, we get $g(\varphi Y,D\lambda)=0$ for all $Y\in\mathfrak{X}_M$, consequently, 
    	\begin{align}\label{3.28A}
    	Y(\lambda) = \xi(\lambda)\,\eta(Y).
    	\end{align}
    	 Now recalling the well known formula (see p.~23 of \cite{yano1970integral}):
    	\begin{align}\label{3.29}
    	(\pounds_{V}\,\nabla)(X, Y)=\nabla_{X}\nabla_{Y}V - \nabla_{\nabla_{X}Y}V - R(X,V)Y.
    	\end{align}
    	 By hypothesis: $V$ is a Jacobi along $\xi$, i.e., 
    	$\nabla_{\xi}\nabla_{\xi}V = R(V,\xi)\xi = 0$. Thus, it suffices to combine \eqref{3.8} and \eqref{3.29} to arrive $D\lambda=2\,\xi(\lambda)\xi$, which yields from \eqref{3.28A} that $\xi(\lambda)=0$, consequently, $D\lambda=0$. Hence $\lambda$ is a constant. Further, from \eqref{3.28A}, we get $\varphi D\lambda=0$. Its covariant derivative along $X\in\mathfrak{X}_M$. Now together with \eqref{2.1}, \eqref{2.2} and Lemma \ref{lem3.2} gives that 
    	\begin{align*}
    	    g(\varphi X,D\lambda)\xi +2\lambda\,\varphi X = \{\xi(\lambda)+\xi(\xi(\lambda))\} \,\varphi X
    	\end{align*} 
    	which settles our claim.
    \end{proof}
    
    Next we prove the following result by  considering Kenmotsu metric $g$ as an almost $*$-Ricci soliton.
    \begin{theo}\label{thm3.1}
    	Let $M^{2n+1}(\varphi, \xi, \eta, g)$, $n>1$, be an $\eta$-Einstein Kenmotsu manifold. If $g$ represents an almost $*$-Ricci soliton with the potential vector field $V$, then it is Einstein with Einstein constant $-2n$.
    \end{theo}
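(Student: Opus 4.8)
The plan is to turn the $\eta$-Einstein hypothesis $\Ric=\alpha\,g+\beta\,\eta\otimes\eta$ into rigidity by feeding it into Lemmas~\ref{lem3.1}--\ref{lem3.2}. First I would record what $\eta$-Einsteinness forces on a Kenmotsu manifold: contracting with $\xi$ and using \eqref{2.5} gives $\alpha+\beta=-2n$; writing the Ricci operator as $QX=\alpha X+\beta\,\eta(X)\,\xi$ and inserting it into \eqref{2.6} yields $\xi(\alpha)=2\beta$ and $\xi(\beta)=-2\beta$; and, since the scalar curvature then equals $r=2n(\alpha-1)$, comparing the contracted second Bianchi identity $\Div Q=\tfrac12\,Dr$ with the direct evaluation of $\Div Q$ from the $\eta$-Einstein form forces, because $n>1$, that $D\alpha=2\beta\,\xi$, hence $D\beta=-2\beta\,\xi$ and $(\nabla_{X}Q)Y-(\nabla_{Y}Q)X=\beta\bigl(\eta(X)Y-\eta(Y)X\bigr)$.

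Next I would substitute these identities, together with Lemma~\ref{lem3.2}(ii), into \eqref{3.13} (a general consequence of Lemma~\ref{lem3.1} and the $V$-derivative \eqref{3.12} of \eqref{2.4}, valid on every Kenmotsu manifold with $n>1$ admitting an almost $*$-Ricci soliton). Under $\eta$-Einsteinness, each of $(\nabla_{X}Q)Y-(\nabla_{Y}Q)X$, $\eta(X)QY-\eta(Y)QX$, $g(\xi,\nabla_{X}D\lambda)Y-g(\xi,\nabla_{Y}D\lambda)X$ and $-\eta(Y)\nabla_{X}D\lambda+\eta(X)\nabla_{Y}D\lambda$ becomes a scalar multiple of $\eta(X)Y-\eta(Y)X$, and these multiples add up to exactly $2(\lambda-2n)$; hence \eqref{3.13} collapses to
\[
R(X,Y)\,\pounds_{V}\xi=g(X,\pounds_{V}\xi)\,Y-g(Y,\pounds_{V}\xi)\,X .
\]
Contracting over $X$ with a local orthonormal frame gives $\Ric(Y,\pounds_{V}\xi)=-2n\,g(Y,\pounds_{V}\xi)$; substituting the $\eta$-Einstein form of $\Ric$ and using $\alpha+2n=-\beta$ reduces this to
\[
\beta\bigl(g(Y,\pounds_{V}\xi)-\eta(Y)\eta(\pounds_{V}\xi)\bigr)=\beta\,g(\varphi Y,\varphi\,\pounds_{V}\xi)=0\qquad(Y\in\mathfrak{X}_{M}).
\]

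It then remains to prove $\beta\equiv0$, for then $\alpha=-2n$ and $\Ric=-2n\,g$, which is the assertion. Suppose, for contradiction, that $\beta\ne0$ on a nonempty open set $U$. On $U$ the last identity forces $\varphi\,\pounds_{V}\xi=0$, i.e.\ $\pounds_{V}\xi=\eta(\pounds_{V}\xi)\,\xi$; and since \eqref{1.1} and \eqref{2.5} give $(\pounds_{V}g)(X,\xi)=2\lambda\,\eta(X)$, one computes $\eta(\pounds_{V}\xi)=g(\xi,\nabla_{V}\xi)-g(\xi,\nabla_{\xi}V)=-\lambda$, so $\pounds_{V}\xi=-\lambda\,\xi$ on $U$. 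I expect the main obstacle to lie exactly here: extracting a contradiction from $\pounds_{V}\xi=-\lambda\,\xi$ together with $\beta\ne0$. The route I would follow is to feed $\pounds_{V}\xi=-\lambda\,\xi$ into the commutation formula \eqref{3.29} and \eqref{3.8}: this gives $\nabla_{\xi}V=V+(\lambda-\eta(V))\,\xi$ and, comparing the two resulting expressions for $(\pounds_{V}\nabla)(X,\xi)$, both $\xi(\lambda)=\lambda+2\beta$ and $D(\eta(V))=\lambda\,\xi$, whence (Cartan's formula and $d\eta=0$) $\pounds_{V}\eta=\lambda\,\eta$ and $\pounds_{V}g=2(\lambda+\beta+1)\,g-2(\beta+1)\,\eta\otimes\eta$. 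On the local warped-product model $J\times_{f}N$ with $f=ce^{t}$, the functions $\lambda$, $\xi(\lambda)$, $\beta$ then satisfy along the $\xi$-lines the linear ordinary differential equations coming from Lemma~\ref{lem3.2}(i), from $D\lambda=\xi(\lambda)\,\xi$, and from $\xi(\beta)=-2\beta$, while the remaining components of $\pounds_{V}g$ impose conformal information on the fibre $N$ (which is K\"ahler--Einstein because $M$ is $\eta$-Einstein); confronting these with the soliton equation once more over-determines $\beta$ and yields $\beta\equiv0$ on $U$, contrary to the choice of $U$. Hence $\beta\equiv0$, and $(M,g)$ is Einstein with Einstein constant $-2n$.
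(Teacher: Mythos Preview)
Your computations up to and including the identity $\beta\,g(\varphi Y,\varphi\,\pounds_{V}\xi)=0$ are correct, and they constitute a different route from the paper's: the paper never looks at $R(X,Y)\pounds_{V}\xi$ at all, but instead evaluates $(\pounds_{V}R)(X,\xi)\xi$ in two ways---once from Lemma~\ref{lem3.1} together with Lemma~\ref{lem3.2} (obtaining a multiple of $\varphi^{2}X$ depending only on $\lambda$), and once from \eqref{3.1} after inserting the $\eta$-Einstein form of $Q$ (obtaining a term $\tfrac{1}{n}\,\xi(r)\,\varphi^{2}X$)---and reads off $\xi(r)=0$. Since on any Kenmotsu manifold one has $\xi(r)=-2\bigl(r+2n(2n+1)\bigr)$ (trace \eqref{2.6}), this forces $r=-2n(2n+1)$, i.e.\ $\beta=0$, in one stroke and without any case analysis.

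The genuine gap in your argument is the contradiction step on the open set $U$ where $\beta\ne0$. Everything you derive there---$\pounds_{V}\xi=-\lambda\,\xi$, $\nabla_{\xi}V=V+(\lambda-\eta(V))\,\xi$, $D\lambda=\xi(\lambda)\,\xi$, the ODE relations for $\lambda$, $\xi(\lambda)$, $\beta$ along the $\xi$-lines---is compatible with nontrivial solutions (for instance, $\lambda''+\lambda'-2\lambda=0$ has the two-parameter family $Ae^{t}+Be^{-2t}$, and $\beta=Ce^{-2t}$), so the claim that ``confronting these with the soliton equation once more over-determines $\beta$'' is exactly the content that is missing. You have not produced an equation that actually fails when $\beta\ne0$; the appeal to the local warped-product model and to the K\"ahler--Einstein fibre is a plan, not an argument, and in fact it is unclear what obstruction the fibre geometry provides here, since conformal vector fields on K\"ahler--Einstein manifolds do exist in some cases. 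If you want to salvage your route, the cleanest way is to abandon the contradiction argument and instead imitate the paper: feed the $\eta$-Einstein data (which you have already processed) into \eqref{3.1} with $Y=\xi$, compare with the general evaluation \eqref{3.1A}, and extract $\xi(r)=0$ directly.
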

    \begin{proof} First substituting $\xi$ for $Y$ in \eqref{3.1} and recalling \eqref{2.1}, \eqref{2.6}, \eqref{2.5}, \eqref{3.7} and Lemma \ref{lem3.2}, we obtain 
    	\begin{align}\label{3.1A}
    	(\pounds_V\,R)(X,Y)\xi = \lambda\,\varphi^2X
    	\end{align}
    	for any $X\in\mathfrak{X}_M$. At this point, taking trace of \eqref{2.11}, we find $r=(2n+1)\alpha+\beta$. On the other hand, inserting $X=Y=\xi$ in \eqref{3.15} and using \eqref{2.5}, we get $\alpha+\beta=-2n$. Thus, equation \eqref{2.11} becomes
    	\begin{align}\label{3.21}
    	QX = \big(1+\frac{r}{2n}\big)\,X -\big(2n+1+\frac{r}{2n}\big)\,\eta(X)\xi.
    	\end{align}
    	In view of \eqref{2.3}, we can compute from foregoing equation that
    	\begin{align*}
    	    (\nabla_X Q)Y-(\nabla_Y Q)X =&\frac{1}{2n}\Big\{X(r)\varphi^2 Y-Y(r)\varphi^2 X\Big\}\\ &-\big(2n+1+\frac{r}{2n}\big)\Big\{\eta(X)Y-\eta(Y)X\Big\}.
    	\end{align*}
    	 Thus, combining \eqref{2.1}, \eqref{3.9}, \eqref{3.21} and Lemma \ref{lem3.2}, we deduce
    	\begin{align}\label{3.23}
    	(\pounds_V\,R)(X, Y)\xi =&\frac{1}{n}\Big\{X(r)\,\varphi^2 Y-Y(r)\,\varphi^2 X\Big\}\notag\\
    	&+\lambda\Big\{\eta(Y)\,\varphi^2X-\eta(X)\,\varphi^2Y\Big\}.
    	\end{align}
    	 Now replacing $Y$ by $\xi$ in \eqref{3.23} and noting that \eqref{3.1A}, we get $\xi(r)\,\varphi^2 X=0$ for any $X\in\mathfrak{X}_M$, consequently, $\xi(r)=0$. Again, \eqref{2.5} follows that $\xi(r)=-2(r+2n(2n+1))$, and therefore $r=-2n(2n+1)$. This shows from \eqref{3.21} that $\Ric=-2n\,g$, which finishes the proof.
    \end{proof}
    
    \begin{theo}
    	If a Kenmotsu manifold $M^{2n+1}(\varphi, \xi, \eta, g)$, $n>1$, admits an almost $*$-Ricci soliton with non-zero potential vector field $V$ is collinear to $\xi$, then it is Einstein with Einstein constant $-2n$.
    \end{theo}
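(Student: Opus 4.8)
Since $V$ is collinear to the Reeb field, the plan is to write $V=\rho\,\xi$ for a smooth function $\rho$ on $M$ (with $\rho\not\equiv 0$) and to show that the almost $*$-Ricci soliton equation then forces $g$ to be $\eta$-Einstein, after which Theorem~\ref{thm3.1} applies verbatim. First I would compute $\pounds_V g$ explicitly. From \eqref{2.3} one has $\nabla_X V=(X\rho)\,\xi+\rho\big(X-\eta(X)\xi\big)$, and symmetrizing gives
\[
(\pounds_V g)(X,Y)=(X\rho)\,\eta(Y)+(Y\rho)\,\eta(X)+2\rho\big(g(X,Y)-\eta(X)\eta(Y)\big)
\]
for all $X,Y\in\mathfrak{X}_M$.

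Next I would insert this into the rewritten soliton identity \eqref{3.3}, which produces
\[
\Ric(X,Y)=(\lambda-2n+1-\rho)\,g(X,Y)+(\rho-1)\,\eta(X)\eta(Y)-\tfrac12\big((X\rho)\eta(Y)+(Y\rho)\eta(X)\big).
\]
Contracting with $Y=\xi$ and using $Q\xi=-2n\xi$ from \eqref{2.5} together with \eqref{2.1}, the left-hand side becomes $-2n\,\eta(X)$ and one is left with $X\rho=(2\lambda-\xi\rho)\,\eta(X)$; evaluating this at $X=\xi$ forces $\xi\rho=\lambda$, hence $D\rho=\lambda\,\xi$, i.e. $X\rho=\lambda\,\eta(X)$ for every $X\in\mathfrak{X}_M$.

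Substituting $X\rho=\lambda\,\eta(X)$ back into the displayed expression for $\Ric$ cancels the gradient terms and yields
\[
\Ric=(\lambda-2n+1-\rho)\,g+(\rho-1-\lambda)\,\eta\otimes\eta,
\]
which is precisely the $\eta$-Einstein form \eqref{2.11}, with $\alpha+\beta=-2n$ in accordance with \eqref{2.5}. Thus $M^{2n+1}$ ($n>1$) is an $\eta$-Einstein Kenmotsu manifold carrying an almost $*$-Ricci soliton, and Theorem~\ref{thm3.1} immediately gives that $g$ is Einstein with Einstein constant $-2n$, as claimed. The whole argument is a short direct computation; the only delicate point is the contraction step, where one must not assume $\rho$ or $\lambda$ to be constant — it is exactly the functional (variable-coefficient) $\eta$-Einstein condition that one needs in order to invoke Theorem~\ref{thm3.1}.
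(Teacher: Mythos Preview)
Your argument is correct and follows essentially the same route as the paper: write $V=\rho\,\xi$, compute $\pounds_V g$ via \eqref{2.3}, insert into \eqref{3.3}, contract against $\xi$ to get $X\rho=\lambda\,\eta(X)$, deduce the $\eta$-Einstein form, and then invoke Theorem~\ref{thm3.1}. The only cosmetic difference is that the paper first sets $X=Y=\xi$ to obtain $\xi(\rho)=\lambda$ and then asserts $X\rho=\lambda\,\eta(X)$, whereas you derive the latter directly from the contraction with general $X$; your version is in fact slightly more explicit.
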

    \begin{proof} By our assumption: $V=\gamma\,\xi$ for some smooth function $\gamma$ on $M$. Its covariant derivative along $X\in\mathfrak{X}_M$ gives
    	$\nabla_X V = X(\gamma)\xi +\gamma (X-\eta(X)\xi)$, where we have used \eqref{2.3}. In view of this, Eq. \eqref{1.1} becomes
    	\begin{align}\label{3.23A}
    	2\,\Ric(X,Y)+X(\gamma)\,\eta(Y)+Y(\gamma)\,\eta(X) =&2(\lambda-\gamma-2n+1)\, g(X,Y)\notag\\
    	&+2(\gamma-1)\,\eta(X)\eta(Y)
    	\end{align}
    	for all $X,Y\in\mathfrak{X}_M$. Replacing both $X$, $Y$ by $\xi$ in \eqref{3.23A} and recalling \eqref{2.5}, we acquire $\xi(\gamma)=\lambda$.
    	It follows from \eqref{3.23A} that $X(\gamma)=\lambda\,\eta(X)$. The Eq. \eqref{3.23A} reduces to following
    	\begin{align}\label{3.24A}
    	\Ric= (\lambda-\gamma-2n+1)\,g - (\lambda-\gamma+1)\,\eta\otimes\eta.
    	\end{align}
    	Consequently, $g$ is $\eta$-Einstein. Invoking Theorem~\ref{thm3.1} we conclude that $g$ is Einstein. It follows from \eqref{3.24A} that $\gamma=\lambda+1$, and therefore,  Eq. \eqref{3.24A} implies that $\Ric=-2ng$. So, the proof is completed.
    \end{proof}
    
    A vector field $X$ on a contact metric manifold $M$ is called a \textit{contact or infinitesimal contact transformation} if it preserves the contact form $\eta$, i.e., there is a smooth function $f:M\to\mathbb{R}$ satisfying
    \begin{align}\label{3.30}
    \pounds_{X}\,\eta=f\,\eta,
    \end{align}
    and if $f=0$ then the vector field $X$ is called \textit{strict}. Now we consider a Kenmotsu metric as an almost $*$-Ricci soliton with the potential vector field $V$ is contact or infinitesimal contact transformation and prove the following.
    \begin{theo}\label{thm3.4}
    	If a Kenmotsu manifold $M^{2n+1}(\varphi, \xi, \eta, g)$ admits an almost $*$-Ricci soliton with the potential vector field $V$ is contact or infinitesimal contact transformation, then the metric $g$ is Einstein metric with Einstein constant $-2n$. Moreover, $V$ is strict.
    \end{theo}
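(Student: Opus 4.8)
The plan is to extract the conformal factor $f$ and the action of $\pounds_V$ on $\eta$ and $\xi$ from the soliton equation, then to deduce that $g$ is $\eta$-Einstein, invoke Theorem~\ref{thm3.1} to pass to an Einstein metric, and finally to force $\lambda$ to vanish.

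First I would substitute $Y=\xi$ in \eqref{3.3}; using $\Ric(X,\xi)=-2n\,\eta(X)$, which follows from \eqref{2.5}, this gives $(\pounds_V g)(X,\xi)=2\lambda\,\eta(X)$. Since $\eta=g(\xi,\cdot)$, the standard identity $(\pounds_V\eta)(X)=(\pounds_V g)(X,\xi)+g(X,\pounds_V\xi)$ together with the hypothesis \eqref{3.30} yields $\pounds_V\xi=(f-2\lambda)\xi$; comparing $\eta(\pounds_V\xi)=f-2\lambda$ with the identity $\eta(\pounds_V\xi)=-f$ (a consequence of $\pounds_V(\eta(\xi))=0$) forces $f=\lambda$, so
\[
\pounds_V\eta=\lambda\,\eta,\qquad \pounds_V\xi=-\lambda\,\xi .
\]
Because a Kenmotsu manifold has $d\eta=0$, Cartan's formula gives $\pounds_V\eta=d(\eta(V))$, whence $d(\eta(V))=\lambda\,\eta$; taking the exterior derivative, $d\lambda\wedge\eta=0$, so that $D\lambda=\xi(\lambda)\,\xi$ and $X(\eta(V))=\lambda\,\eta(X)$ (in particular $D\lambda$ is collinear with $\xi$, here with no restriction on $n$).

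Next I would establish the $\eta$-Einstein property. From $\pounds_V\xi=-\lambda\xi$ and \eqref{2.3} one gets $\nabla_\xi V=V+(\lambda-\eta(V))\xi$. Inserting this, \eqref{2.3}, \eqref{2.4}, $D\lambda=\xi(\lambda)\xi$ and $X(\eta(V))=\lambda\eta(X)$ into the commutation formula \eqref{3.29} at $Y=\xi$, every term containing $V$ explicitly cancels and one is left with
\[
(\pounds_V\nabla)(X,\xi)=\lambda\,X+\big(\xi(\lambda)-\lambda\big)\eta(X)\,\xi .
\]
On the other hand, the two terms $X(\lambda)\,\xi-\eta(X)\,D\lambda$ in \eqref{3.8} cancel because $D\lambda=\xi(\lambda)\xi$; equating the two expressions for $(\pounds_V\nabla)(X,\xi)$ and solving for $QX$ gives
\[
QX=\tfrac12\big(\lambda-4n-\xi(\lambda)\big)X+\tfrac12\big(\xi(\lambda)-\lambda\big)\eta(X)\,\xi ,
\]
so $g$ is $\eta$-Einstein, with $\alpha+\beta=-2n$ in accordance with \eqref{2.5}. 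Invoking Theorem~\ref{thm3.1} we conclude that $g$ is Einstein with Einstein constant $-2n$; equivalently $\xi(\lambda)=\lambda$, and therefore $D\lambda=\lambda\,\xi$.

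It remains to prove $\lambda\equiv0$, and this is the step I expect to be the real obstacle. With $\Ric=-2n\,g$ the soliton equation \eqref{3.3} reduces to $\pounds_V g=2(\lambda+1)g-2\,\eta\otimes\eta$, which, using $\pounds_\xi g=2(g-\eta\otimes\eta)$ (immediate from \eqref{2.3}), can be recast as $\pounds_{V-\xi}\,g=2\lambda\,g$; thus $W:=V-\xi$ is a conformal vector field on the Einstein Kenmotsu manifold, and $D\lambda=\lambda\,\xi$ gives ${\rm Hess}_\lambda=\lambda\,g$ and $\Div D\lambda=(2n+1)\lambda$. The difficulty is that, once $\Ric=-2n\,g$, the standard identities relating $W$, ${\rm Hess}_\lambda$ and $\Ric$ are satisfied identically, so they do not by themselves force $\lambda=0$. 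I would therefore try to use the local warped-product description $J\times_f N$ of the Kenmotsu manifold, along whose $\xi$-integral curves the relation $\xi(\lambda)=\lambda$ confines $\lambda$ to a constant multiple of the warping function $f$, together with a divergence/normalisation argument for $W$ to make that multiple vanish; I expect this last normalisation to be the crux. Granting it, $\lambda\equiv0$, so $f=0$, i.e.\ $V$ is strict, while $g$ is Einstein with Einstein constant $-2n$ by the previous step, which is the assertion.
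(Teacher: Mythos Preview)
Your proposal has a genuine gap at exactly the point you flag as ``the real obstacle'': you do not prove $\lambda\equiv 0$. The warped-product heuristic you sketch does not close the argument---once you reach ${\rm Hess}_\lambda=\lambda\,g$ with $D\lambda=\lambda\,\xi$, the standard conformal/Obata-type identities on a non-compact Kenmotsu manifold are indeed satisfied identically (as you yourself observe), and no divergence normalisation is available without further global hypotheses. So the proof, as written, is incomplete.

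There is also a secondary issue: your route passes through Theorem~\ref{thm3.1}, which carries the restriction $n>1$, whereas Theorem~\ref{thm3.4} is stated without that restriction. So even if your final step could be completed, you would have proved a weaker statement.

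The paper avoids both problems by a different mechanism. After obtaining $\pounds_V\xi=-\lambda\,\xi=[V,\xi]$ (which you also get), it does \emph{not} try to compare two expressions for $(\pounds_V\nabla)(X,\xi)$. Instead it exploits the commutator identity
\[
\pounds_{V}\pounds_{\xi}\,g-\pounds_{\xi}\pounds_{V}\,g=\pounds_{[V,\xi]}\,g,
\]
computing each of the three terms separately: $\pounds_{[V,\xi]}g$ directly from $[V,\xi]=-\lambda\xi$; $\pounds_\xi\pounds_V g$ by Lie-differentiating the soliton equation \eqref{3.3} along $\xi$ and using an explicit formula for $\pounds_\xi\Ric$; and $\pounds_V\pounds_\xi g$ by Lie-differentiating $\pounds_\xi g=2(g-\eta\otimes\eta)$ along $V$. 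Equating them yields a single identity of the shape
\[
4\,\Ric(X,Y)-X(\lambda)\eta(Y)-Y(\lambda)\eta(X)=2(\lambda-4n-\xi(\lambda))\,g(X,Y)-10\lambda\,\eta(X)\eta(Y),
\]
and setting $X=Y=\xi$ immediately forces $\lambda=0$; then the same identity gives $\Ric=-2n\,g$. This bypasses Theorem~\ref{thm3.1} (hence no $n>1$), and---crucially---delivers $\lambda=0$ and the Einstein condition simultaneously rather than sequentially.
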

    \begin{proof}
    	From \eqref{3.3} and \eqref{2.5}, we get $(\pounds_{V}\,g)(X,\xi) = 2\lambda\,\eta(X)$. In addition, it follows from $\eta(X)=g(X,\xi)$ that
    	\begin{align}\label{3.31}
    	(\pounds_{V}\,\eta)(X) = 2\lambda\,\eta(X) + g(X,\pounds_V\xi)
    	\end{align}
    	for all $X\in\mathfrak{X}_M$. It suffices to use \eqref{3.30} to infer $g(X,\pounds_V\xi) = (f-2\lambda)\,\eta(X)$. Easily one verifies that $(\pounds_{V}\,\eta)(\xi) + \eta(\pounds_V\xi)=0$. Combining this with \eqref{3.31}, we deduce $\eta(\pounds_V\xi)=-\lambda$, consequently, $f=\lambda$. Thus, $\pounds_V\xi=-\lambda\,\xi=[V,\xi]$. Taking its covariant derivative along $X\in\mathfrak{X}_M$ and recalling \eqref{2.3} gives $\nabla_X[V,\xi]=-X(\lambda)\xi - \lambda(X-\eta(X)\xi)$. From this fact, it is easy to see that
    	\begin{align}\label{3.32}
    	(\pounds_{[V,\xi]}\,g)(X,Y) =&-X(\lambda)\,\eta(Y) -Y(\lambda)\,\eta(X)\\
    	&-2\lambda\Big\{g(X,Y) - \eta(X)\eta(Y)\Big\}.
    	\end{align}
    	 On the other hand, Lie-derivative of \eqref{3.3} along $\xi$ provides
    	\begin{align}\label{3.33}
    	(\pounds_{\xi}\,\pounds_{V}\,g)(X,Y)&+2(\pounds_{\xi}\Ric)(X,Y)\notag\\
    	=&2\xi(\lambda)\,g(X,Y) + 2(\lambda - 2n + 1) (\pounds_{\xi}\,g)(X,Y) \nonumber\\ 
    	&- 2\Big\{\eta(Y)\,(\pounds_{\xi}\,\eta)(X) + \eta(X)\,(\pounds_{\xi}\,\eta)(Y)\Big\}.
    	\end{align}
    	According to \eqref{2.3}, it is easily seen that
    	\begin{align}\label{3.34}
    	(\pounds_{\xi}\,g)(X,Y) = 2\Big\{g(X,Y) - \eta(X)\eta(Y)\Big\}
    	\end{align}
    	Taking the covariant derivative along $Z\in\mathfrak{X}_M$  and using \eqref{2.3} yields
    	\begin{align}\label{3.35}
    	(\nabla_Z\pounds_{\xi}\,g)(X, Y) = 2\Big\{2\eta(X)\eta(Y)\eta(Z) - g(X, Z)\eta(Y) - g(Y, Z)\eta(X)\Big\}.
    	\end{align}
    	 Combining this with \eqref{3.4A}, we acquire
    	\begin{align*}
    		g((\pounds_{\xi}\,\nabla)(Z,X),Y)&-g((\pounds_{\xi}\,\nabla)(Z,Y),X)\\
    		&= 2\Big\{2\eta(X)\eta(Y)\eta(Z) - g(X,Z)\eta(Y) - g(Y,Z)\eta(X)\Big\},
    	\end{align*}
    	where we have used the Riemannian metric $g$ is parallel. By direct calculation, last equation provides 
    	\begin{align}\label{3.36}
    	(\pounds_{\xi}\nabla)(X,Y) = -2\Big\{g(X,Y) - \eta(X)\eta(Y)\Big\}.
    	\end{align}
    	 Differentiating this covariantly along $Z\in\mathfrak{X}_M$ and recalling \eqref{2.3}, we compute
    	\begin{align*}
    		(\nabla_Z\pounds_{\xi}\nabla)(X,Y) &=2\bigg\{g(X,Z)\,\eta(Y)\xi + g(Y,Z)\,\eta(X)\xi + g(X,Y)\,\eta(Z)\xi \nonumber\\
    		-&g(X,Y)Z + \eta(X)\eta(Y)Z - 3\,\eta(X)\eta(Y)\eta(Z)\xi\bigg\}.
    	\end{align*}
    	Applying this in the formula \eqref{3.9A}, we derive that
    	\begin{align}\label{3.37}
    	(\pounds_{\xi}R)(Z,X)Y = 2\Big\{g(Y,Z)X - g(X,Y)Z + \eta(X)\eta(Y)Z - \eta(Y)\eta(Z)X\Big\}.	
    	\end{align}
    	Contracting \eqref{3.37} over $Z$, we achieve
    	\begin{align}\label{3.38}
    	(\pounds_{\xi}\Ric)(X,Y) = -4n\Big\{g(X,Y) - \eta(X)\eta(Y)\Big\}.
    	\end{align}
    	In view of \eqref{3.34} and \eqref{3.38}, Eq. \eqref{3.33} transform into
    	\begin{align}\label{3.39}
    	(\pounds_{\xi}\,\pounds_{V}\,g)(X,Y) = 2\Big\{2(\lambda+1) + \xi(\lambda)\Big\}g(X,Y) - 4(\lambda + 1)\,\eta(X)\eta(Y).
    	\end{align}
    	Recalling $\pounds_V\xi = -\lambda\,\xi$ and \eqref{3.31}, we get $(\pounds_V\eta)(X)=\lambda\,\eta(X)$. Using this and \eqref{3.3} in the Lie-derivative of \eqref{3.34} along the potential vector field $V,$ we arrive at
    	\begin{align}\label{3.40}
    	(\pounds_{V}\,\pounds_{\xi}\, g)(X,Y) = -4\Big\{&\Ric(X,Y) - (\lambda - 2n +1)g(X,Y)\notag\\
    	&+ (\lambda + 1)\,\eta(X)\eta(Y)\Big\}.
    	\end{align}
    	In particular, the formula (see \cite{kenmotsu1972class}): 
    	$\pounds_{X}\,\pounds_{Y}\,g - \pounds_{Y}\,\pounds_{X}\,g = \pounds_{[X,Y]}\,g$ can be written in the following form
    	\begin{align*}
    	    \pounds_{V}\,\pounds_{\xi}\,g(X,Y) - \pounds_{\xi}\,\pounds_{V}\,g(X,Y) = \pounds_{[V,\xi]}\,g(X,Y)
    	\end{align*}
    	for all $X,Y\in\mathfrak{X}_M$. Now it suffices to combine \eqref{3.32}, \eqref{3.39} and \eqref{3.40} in previous equation to arrive 
    	\begin{align}\label{3.41}
    	4\,\Ric(X,Y) - &X(\lambda)\,\eta(Y) -Y(\lambda)\,\eta(X)\notag\\
    	&= 2(\lambda - 4n - \xi(\lambda))\,g(X,Y) -10 \lambda\,\eta(X)\eta(Y).
    	\end{align}
    	At this point, replacing $\xi$ for both $X$, $Y$ in \eqref{3.41} and using \eqref{2.5}, we achieve $\lambda=0$, consequently, $f=0$, and therefore, $V$ is strict. Further, Eq. \eqref{3.41} yields that $\Ric =-2n\,g$, as required.
    \end{proof}
     \section*{Acknowledgements}
     The authors thank the referees for their valuable and constructive comments
     for modifying the presentation of this work. This research was funded by the Deanship of Scientific Research at Princess Nourah bint Abdulrahman University through the Fast-track Research Funding Program. The author was financially supported by Council for Higher Education Planning and Budgeting Committee (PBC) through the University of Haifa, Israel.


\begin{thebibliography}{99}
    	
    	
    	
    	
    	\bibitem{barros2012some}
    	Barros, A.; Ribeiro, E., Jr. {\it Some characterizations for compact almost Ricci solitons.} Proc. Amer. Math. Soc. \textbf{140}(3), (2012); 1033--1040. 
    	
    	
    	
    	\bibitem{blair2010riemannian}
    	Blair, D. E., \textit{Riemannian geometry of contact and symplectic manifolds.} Birkh\"{a}user, Boston, (2002).
    	
    	%
    	%
    	
    	\bibitem{xiaomin2018real} Chen, X., {\it Real hypersurfaces with $^*$-Ricci solitons of non-flat complex space forms.} Tokyo J. Math. \textbf{41} (2018), 433--451.
    	\bibitem{Chen1} Chen, X., {\it Quasi-Einstein structures and almost cosymplectic manifolds.} Rev. R. Acad. Cienc. Exactas Fís. Nat. Ser. A Mat. RACSAM. \textbf{114} (2020), 72, 14
    	\bibitem{Chen2} Chen, X., Cui, X., {\it The k-almost Yamabe solitons and contact metric manifolds.}
    	Rocky Mountain J. Math. (2020).
    	
    	\bibitem{dai2019non} Dai, X.: {\it Non-existence of $*$-Ricci solitons on $(\kappa, \mu)$-almost cosymplectic manifolds.} J. Geom. \textbf{110}(2), (2019); 30.  
    	
    	\bibitem{dai2019ricci} 
    	Dai, X., Zhao, Y. and De, U. C., {\it $*$-Ricci soliton on ($\kappa, \mu)$-almost Kenmotsu manifolds.} Open Math. \textbf{17}(2019); 874--882. 
    	
    	
    	\bibitem{dileo2009almost}
    	Dileo, G. and Pastore, A. M., {\it Almost Kenmotsu manifolds and nullity distributions.} J. Geom. \textbf{93}, (2009); 46--61. 
    	
    	\bibitem{dileo2007almost}
    	Dileo, G. and Pastore, A. M., {\it Almost Kenmotsu manifolds and local symmetry.} Bull. Belg. Math. Soc. Simon Stevin \textbf{14}(2), (2007); 343--354. 
    	
    	
    	\bibitem{fernandez2008remark}
    	Fern{\'a}ndez-L{\'o}pez, M. and Garc{\'\i}a-R{\'\i}o, E., {\it A Remark on compact Ricci solitons.} Math. Ann. \textbf{340}, (2008); 893--896. 
    	
    	\bibitem{ghosh2013eta}
    	Ghosh, A., {\it An $\eta$-Einstein Kenmotsu metric as a Ricci soliton.} Publ. Math. Debrecen \textbf{82}(3-4), (2013); 591--598. 
    	
    	
    	\bibitem{ghosh2020ricci} 
    	Ghosh, A., {\it Ricci almost soliton and almost Yamabe soliton on Kenmotsu manifold.} Asian Eur. J. Math. (2020), doi.org/10.1142/S1793557121501308.
    	
    	\bibitem{ghosh2019ricci}
    	Ghosh, A., {\it Ricci soliton and Ricci almost soliton within the framework of Kenmotsu manifold.} Carpathian Math. Publ. \textbf{11}(1), (2019); 59--69.
    	
    	\bibitem{ghosh2018ricci} 
    	Ghosh, A. and Patra, D. S., {\it $*$-Ricci Soliton within the frame-work of Sasakian and $(\kappa,\mu)$-contact manifold.} Int. J. Geom. Methods Mod. Phys. \textbf{15}(7), (2018); pp-1850120. 
    	
    	
    	
    	\bibitem{hamada2002real} 
    	Hamada, T., {\it Real hypersurfaces of complex space forms in terms of Ricci $*$-tensor.} Tokyo J. Math. \textbf{25}, (2002); 473--483. 
    	
    	\bibitem{hamilton1988ricci}
    	Hamilton, R.S., {\it The Ricci fow on surfaces, In: Mathematics and General Relativity.} Contemp. Math. \textbf{71}, Amer. Math. Soc. (1988); pp. 237--262. 
    	
    	\bibitem{ivey2010ricci} 
    	Ivey, T. and Ryan, P.J., {\it The $*$-Ricci tensor for hypersurfaces in $CP_n$ and $CH_n$.} Tokyo J. Math. \textbf{34}, (2011); 445-471. 
    	
    	\bibitem{kaimakamis2014ricci2} 
    	Kaimakamis, G. and Panagiotidou, K.: {\it $*$-Ricci solitons of real hypersurface in non-flat complex space forms} J. Geom. Phys., \textbf{76}, (2014); 408--413. 
    	
    	
    	\bibitem{kenmotsu1972class}
    	Kenmotsu, K., {\it A class of almost contact Riemannian manifolds.} T\^{o}hoku Math. J. \textbf{24}, (1972); 93--103. 
    	
    	
    	\bibitem{munteanu2013gradient}
    	Munteanu, O. and Sesum, N., {\it On Gradient Ricci Solitons.} J. Geom. Anal. \textbf{23}(2), (2013); 539--561 
    	
    	
    	\bibitem{P20} Patra, D.S.: {\it $K$-contact metrics as Ricci almost solitons.} Beitr. Algebra Geom.  (2020), doi.org/10.1007/s13366-020-00539-y
    	
    	\bibitem{patra2020almost} Patra, D.S. and Rovenski, V., {\it Almost $\eta$-Ricci solitons on Kenmotsu manifolds.}  (2020), arXiv preprint arXiv:2008.12497 
    	\bibitem{pigola2011ricci}
    	Pigola, S., Rigoli, M., Rimoldi, M. and Setti, A., {\it Ricci almost solitons.} Ann. Sc. Norm. Sup. Pisa Cl. Sci. \textbf{10}, 757--799 (2011)
    	
    	\bibitem{petersen2009rigidity}
    	Petersen, P. and Wylie, W., {\it Rigidity of gradient Ricci solitons.} Pacific J. Math. \textbf{241}(2), (2009); 329--345. 
    	
    	
    	
    	\bibitem{sharma2008certain}
    	Sharma, R., {\it Certain results on $K$-contact and $(\kappa,\mu)$-contact manifolds.} J. Geom. \textbf{89}, (2008); 138--147. 
    	
    	\bibitem{sharma2014almost}
    	Sharma, R., {\it Almost Ricci solitons and $K$-contact geometry.} Monatsh. Math. \textbf{175}(4), (2015) 621--628. 
    	
    	\bibitem{tachibana1959} 
    	Tachibana, S.: On almost-analytic vectors in almost K$\ddot{a}$hlerian manifolds, Tohoku Math. J. \textbf{11}, 247--265 (1959)
    	
    	\bibitem{venkatesh2019ricci} 
    	Venkatesh, V., Naik, D.M. and Kumara, H.A., {\it $*$-Ricci solitons and gradient almost $*$-Ricci solitons on Kenmotsu manifolds.} Math. Slovaca \textbf{69}(6), (2019); 1447--1458. 
    	
    	\bibitem{wang2020contact} 
    	Wang Y., {\it Contact $3$-manifolds and $*$-Ricci soliton.} Kodai Math. J., \textbf{43}(2), (2020); 256--267. 
    	
    	\bibitem{wang2017ricci}
    	Wang Y., {\it Ricci solitons on $3$-dimensional cosymplectic manifold.} Math. Slovaca \textbf{67}(4), (2017); 979--984. 
    	
    	\bibitem{yano1970integral}
    	Yano, K., \textit{Integral formulas in Riemannian geometry.} Vol. \textbf{1}. M. Dekker. (1970)
    	
    	\bibitem{yano1984structures}
    	Yano, K. and Kon, M., \textit{Structures on manifolds.} Series in Pure Mathematics, 3, World Scientific Pub. Co., Singapore, (1984)
    	
    \end{thebibliography}
\end{document}